\newtheorem{thm}{Theorem}
\newtheorem{defn}{Definition}
\numberwithin{defn}{section}
\numberwithin{thm}{section}
\numberwithin{Lemma}{section}
\numberwithin{Corollary}{section}
\numberwithin{Example}{section}
\numberwithin{subsection}{section}
\numberwithin{Remark}{section}
\numberwithin{equation}{section}
\numberwithin{ppn}{section}
\begin{document}
\title[ A class of iterative methods ... ]
{A class of iterative methods for solving nonlinear equation with fourth-order convergence} 
\author{J. P. Jaiswal }
\date{}
\maketitle


\textbf{Abstract.} 
In this paper we established a class of optimal fourth-order methods  which is obtained by existing third-order method for solving nonlinear equations for simple roots by using weight functions. Some physical examples are given to illustrate the efficiency and performance of our method.\\

\textbf{Mathematics Subject Classification (2000).} 65H05.
\\

\textbf{Keywords and Phrases.} Newton method, order of convergence, optimal order, inverse function, weight function.

\section{Introduction}
Nonlinear equations plays an important role in science and engineering. Finding an analytic solution is not always possible. Therefore, numerical methods are used in such situations. The classical Newton's method is the best known iterative method for solving nonlinear equations. To improve the local order of convergence and efficiency index, many modified third-order methods have been presented in literature. For detail we refer [\cite{Weerakoon}, \cite{Homeier2}, \cite{Ozban}, \cite{Wang}, \cite{Jain}] and references therein.
 
Recently Ardelean \cite{Ardelean} established \textit{Bisectrix Newton's Method (BN)},  which is given by
 \begin{eqnarray}\label{eqn:11a}
 x_{n+1}=x_n-\frac{(f'(x_n)+f'(y_n))f(x_n)}{f'(x_n)f'(y_n)+\sqrt{(1+f'(x_n)^2)(1+f'(y_n)^2)}-1}, n\geq 0,
 \end{eqnarray}
where $y_n=x_n-\frac{f(x_n)}{f'(x_n)}$. The method is third-order convergent for simple roots and its efficiency index is $3^{1/3}=1.442$.

Weerakoon et al. \cite{Weerakoon} used Newton's theorem
\begin{eqnarray}\label{eqn:11b}
f(x)=f(x_n)+\int_{x_n}^{x}f'(t)dt
\end{eqnarray}
and approximated the integral by trapezoidal rule, i.e.
\begin{eqnarray}\label{eqn:11c}
\int_{x_n}^{x}f'(t)dt=\frac{(x-x_n)}{2}[f'(x_n)+f'(x)],
\end{eqnarray}
obtained the following variant of the Newton method
\begin{eqnarray}\label{eqn:11d}
 x_{n+1}=x_n-\frac{2f(x_n)}{f'(x_n)+f'(y_n)},
\end{eqnarray}
where $y_n=x_n-\frac{f(x_n)}{f'(x_n)}$. It is shown that this is third-order. Many authors used this idea by approximating the integral $\int_{x_n}^{x}f'(t)dt$ by different ways. For more detail, one can see [ \cite{Frontini1}, \cite{Frontini2}, \cite{Frontini3},  \cite{Homeier1}, \cite{Homeier2}, \cite{Wang}, \cite{Jain} ] and the references there in.
If we approximated the integral  in $(\ref{eqn:11b})$ by
\begin{eqnarray}\label{eqn:11e}
\int_{x_n}^{x}f'(t)dt=(x-x_n)\left[\frac{f'(x_n)f'(x)+\sqrt{(1+f'(x_n)^2)(1+f'(x)^2)}-1}{(f'(x_n)+f'(x))}\right],
 \end{eqnarray}
we get the same formula $(\ref{eqn:11a})$. 

Next Homeier \cite{Homeier2} used Newton's theorem $(\ref{eqn:11b})$ for the inverse function $x=f^{-1}(y)=g(y)$ instead of $y=f(x)$, that is 
\begin{eqnarray}\label{eqn:11f}
g(y)&=& g(y_n)+\int_{y_n}^{y}g'(s)ds. 
\end{eqnarray}
Then the method $(\ref{eqn:11d})$ takes the form 
\begin{eqnarray}\label{eqn:11g}
 x_{n+1}=x_n-\frac{f(x_n)}{2}\left[\frac{1}{f'(x_n)}+ \frac{1}{f'(y_n)}\right],
\end{eqnarray}
where $y_n=x_n-\frac{f(x_n)}{f'(x_n)}$. This method is again third-order.

Here we state following  definitions:
\begin{defn}
Let f(x) be a real function with a simple root $\alpha$ and let ${x_n}$ be a sequence of real numbers that converge towards $\alpha$. The order of convergence m is given by
\begin{equation}\label{eqn:21}
\lim_{n\rightarrow\infty}\frac{x_{n+1}-\alpha}{(x_n-\alpha)^m}=\zeta\neq0,  
\end{equation}     
\noindent
where $\zeta$ is the asymptotic error constant and $m \in R^+$.\\
\end{defn}

\begin{defn}
Let $\beta$ be the number of function evaluations of the new method. The efficiency of the new method is measured by the concept  of efficiency index \cite{Gautschi,Traub1} and defined as
\begin{equation}\label{eqn:23}
\mu^{1/\beta},
\end{equation}
where $\mu$ is the order of the method.\\
\end{defn}

Kung and Traub \cite{Kung} presented a hypothesis on the optimality of roots by giving $2^{n-1}$ as the optimal order. This means that the Newton iteration by two evaluations per iterations is optimal with 1.414 as the efficiency index. By taking into account the optimality concept many authors have tried to build iterative methods of optimal higher order of convergence.

This paper is organized as follows: in section 2, we describe the new third-order iterative method by using the concept of inverse function.  In the next section we optimize the method of Chun et. al \cite{Chun} by using the concept of weight function. Finally in the last section we give some physical example and the new methods are compared in the performance with some well known methods.

                             
\section{Development of the method and convergence analysis}
In this section we use the concept of inverse function to derive variants of Bisectrix Newton's Method. In the formula $(\ref{eqn:11a})$, function $y=f(x)$ has been used. Here we use inverse function $x=f^{-1}(y)=g(y)$ instead of $y=f(x)$. Then we have
\begin{eqnarray}\label{eqn:21}
g(y)&=& g(y_n)+\int_{y_n}^{y}g'(s)ds \nonumber \\
    &=&g(y_n)+(y-y_n)\left[\frac{g'(x_n)g'(y_n)+\sqrt{(1+g'(x_n)^2)(1+g'(y_n)^2)}-1}{(g'(x_n)+g'(y_n))}\right],
\end{eqnarray} 
where $y_n=f(x_n)$. Now using the fact that $g'(y)=(f^{-1})^{'}(y)=[f(x)]^{-1}$ and that $y=f(x)=0$, we obtain the following method:
\begin{eqnarray}\label{eqn:22}
 x_{n+1}=x_n-f(x_n)\left[\frac{1+\sqrt{(1+f'(x_n)^2)(1+f'(y_n)^2)}-f'(x_n)f'(y_n)}{f'(x_n)+f'(y_n)}\right].
 \end{eqnarray}
where 
\begin{eqnarray}\label{eqn:22a}
y_n=x_n-\frac{f(x_n)}{f'(x_n)}.
 \end{eqnarray}

Now we prove that order of convergence of this method is also three.

\begin{thm}
Let the function f have sufficient number of continuous derivatives in a neighborhood of $\alpha$ which is a simple root of f, then the method $(\ref{eqn:22})$ has third-order convergence.
\end{thm}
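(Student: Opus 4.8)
The plan is to carry out a standard local error analysis based on Taylor expansion about the simple root $\alpha$. Write $e_n = x_n - \alpha$, set $a = f'(\alpha) \neq 0$, and introduce the normalized coefficients $c_j = \frac{f^{(j)}(\alpha)}{j!\,f'(\alpha)}$. First I would record the expansions
\[ f(x_n) = a\bigl[e_n + c_2 e_n^2 + c_3 e_n^3 + O(e_n^4)\bigr], \qquad f'(x_n) = a\bigl[1 + 2c_2 e_n + 3c_3 e_n^2 + O(e_n^3)\bigr], \]
from which the Newton substep $(\ref{eqn:22a})$ yields the classical quadratic error $y_n - \alpha = c_2 e_n^2 + O(e_n^3)$, and hence $f'(y_n) = a\bigl[1 + 2c_2^2 e_n^2 + O(e_n^3)\bigr]$. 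The structural point to keep in mind is that $f'(x_n)$ departs from $a$ already at order $e_n$, whereas $f'(y_n)$ departs only at order $e_n^2$.

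Next I would analyze the multiplier
\[ B = \frac{1 + \sqrt{(1+f'(x_n)^2)(1+f'(y_n)^2)} - f'(x_n)f'(y_n)}{f'(x_n)+f'(y_n)}. \]
The leading value is reassuring: putting $f'(x_n) = f'(y_n) = a$ collapses the radical to $1+a^2$ and gives $B = \frac{1 + (1+a^2) - a^2}{2a} = \frac{1}{a}$, so the iteration is Newton-like at leading order. The heart of the argument is the expansion of $B$ through order $e_n^2$, and the genuinely delicate step is the radical $\sqrt{(1+f'(x_n)^2)(1+f'(y_n)^2)}$: writing $f'(x_n) = a + \delta_u$ with $\delta_u = O(e_n)$ and $f'(y_n) = a + \delta_v$ with $\delta_v = O(e_n^2)$, factoring out $A := 1+a^2$, and using $\sqrt{1+t} = 1 + \tfrac12 t - \tfrac18 t^2 + \cdots$. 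The key phenomenon I would verify is that in the numerator $N = 1 + \sqrt{(1+f'(x_n)^2)(1+f'(y_n)^2)} - f'(x_n)f'(y_n)$ the constant term is exactly $2$ and, more importantly, the $O(e_n)$ term of the radical cancels against the $O(e_n)$ term of $-f'(x_n)f'(y_n)$, so that $N = 2 + O(e_n^2)$ with no linear contribution.

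Granting this, the whole linear correction of $B$ comes from the denominator $f'(x_n)+f'(y_n) = 2a\bigl[1 + c_2 e_n + O(e_n^2)\bigr]$, giving $B = \frac{1}{a}\bigl[1 - c_2 e_n + O(e_n^2)\bigr]$. Finally I would form $e_{n+1} = e_n - f(x_n)\,B$ and collect powers of $e_n$. The $O(e_n)$ term reproduces $e_n - e_n = 0$, and the $O(e_n^2)$ term cancels precisely because the $-c_2 e_n$ in $B$ meets the $+c_2 e_n^2$ in $f(x_n) = a e_n\bigl[1 + c_2 e_n + \cdots\bigr]$; this leaves
\[ e_{n+1} = \left(\frac{c_2^2}{1 + f'(\alpha)^2} + \frac{c_3}{2}\right)e_n^3 + O(e_n^4), \]
which is a third-order relation in the sense of the order-of-convergence definition, with an asymptotic error constant that is nonvanishing in general, thereby confirming the claim. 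I expect the radical expansion of the second step — specifically verifying the vanishing of the linear term of $N$ — to be the only error-prone part; everything downstream is routine symbolic manipulation and is easily confirmed with a computer algebra system.
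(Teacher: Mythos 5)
Your proposal is correct and follows essentially the same route as the paper: a local Taylor expansion of $f(x_n)$, $f'(x_n)$ and $f'(y_n)$ about the simple root, substituted into the iteration to yield the error equation $e_{n+1}=\left(\frac{c_2^2}{1+f'(\alpha)^2}+\frac{c_3}{2}\right)e_n^3+O(e_n^4)$, which is exactly the paper's equation $(\ref{eqn:28})$. The only difference is presentational: you isolate the multiplier $B$ and exhibit explicitly the cancellation of the $O(e_n)$ term between the radical and $-f'(x_n)f'(y_n)$ (a check that is indeed correct), whereas the paper simply reports the outcome of the substitution.
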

\begin{proof}
Let $e_n=x_n-\alpha$ be the error in the $n^{th}$ iterate and $c_h=\frac{f^{(h)}(\alpha)}{h!}$, $h=1,2,3 . . .$. We provide the Taylor series expansion of each term involved in $(\ref{eqn:22})$. By Taylor expansion around the simple root in the $n^{th}$ iteration, we have\\
\begin{equation}\label{eqn:23}
\begin{split}
f(x_n)=f'(\alpha)[e_n+c_2e_n^2+c_3e_n^3+c_4e_n^4+c_5^5e_n^5+c_6e_n^6+O(e_n^7)]         
\end{split}
\end{equation}
and, we have
\begin{equation}\label{eqn:24}
\begin{split}
f'(x_n)=f'(\alpha)[1+2c_2e_n+3c_3e_n^2+4c_4e_n^3+5c_5^5e_n^4+6c_6e_n^5+O(e_n^6)].           
\end{split}
\end{equation}
Further more it can be easily find 
\begin{equation}\label{eqn:25}
\frac{f(x_n}{f'(x_n)}=e_n-c_2e_n^2+(2c_2^2-2c_3)e_n^3+........+O(e_n^6).
\end{equation}

By considering this relation, we obtain
\begin{equation}\label{eqn:26}
y_n=\alpha+c_2e_n^2+2(c_3-c_2^2)e_n^3+......+O(e_n^{6}).
\end{equation}

At this time, we should expand $f'(y_n)$ around the root by taking into consideration $(\ref{eqn:26})$. Accordingly, we have
\begin{equation}\label{eqn:27}
f'(y_n)=f'(\alpha)[1+2c_2^2e_n^2+(4c_2c_3-4c_2^3)e_n^3+ . . . +O(e_n^{6})].
\end{equation}

By consider the above mentioned relations $(\ref{eqn:23})$, $(\ref{eqn:24})$ and $(\ref{eqn:27})$ in the equation $(\ref{eqn:22})$, we can find
\begin{equation}\label{eqn:28}
\begin{split}
e_{n+1}=\left(\frac{c_2^2}{1+f'(\alpha)^2}+\frac{c_3}{2}\right)e_n^3+O(e_n^{4}).
\end{split}
\end{equation}
It confirms the result.
\end{proof}
\section{Optimal fourth-order method}
By using circle of curvature concept Chun et. al. \cite{Chun} constructed a third-order iterative methods defined by
\begin{eqnarray}\label{eqn:a31}
y_n&=&x_n-\frac{f(x_n)}{f'(x_n)},\nonumber\\
x_{n+}&=&x_n-\frac{1}{2}\left[3-\frac{f'(y_n)}{f'(x_n)}\right]\frac{f(x_n)}{f'(x_n)}.
\end{eqnarray} 
The order of this method three is with three (one derivative and two function) evaluations per full iteration. Clearly its efficiency index $(3^{1/3}\approx 1.442)$ is not high (optimal). We now make use of weight function approach to build our optimal class based on $(\ref{eqn:a31})$ by a simple change in its first step.  Thus we consider
\begin{eqnarray}\label{eqn:a32}
y_n&=&x_n-a\frac{f(x_n)}{f'(x_n)},\nonumber\\
x_{n+}&=&x_n-\frac{1}{2}\left[3-\frac{f'(y_n)}{f'(x_n)}\right]\frac{f(x_n)}{f'(x_n)}\times G(t).
\end{eqnarray}
where  $G(t)$ is a real-valued weight function with  $t=\frac{f'(y_n)}{f'(x_n)}$ and $a$ is a real constant. The weight function should be chosen such that order of convergence arrives at optimal level  four without using more function evaluations. The following theorem indicates under what conditions on the weight functions and constant $a$ in $(\ref{eqn:a32})$, the order of convergence will arrive at the optimal level four:

\begin{thm}
Let the function f have sufficient number of continuous derivatives in a neighborhood of $\alpha$ which is a simple root of f, then the method $(\ref{eqn:a32})$ has fourth-order convergence, when $a=2/3$ and the weight function $G(t)$  satisfies the following conditions
\begin{eqnarray}\label{eqn:a33}
G(1)=1, G^{'}(1)=\frac{-1}{4},  G^{''}(1)=2,  \left|G^{(3)}(1)\right|\leq +\infty,
\end{eqnarray}
and the error equation is given by $(\ref{eqn:36})$.
\end{thm}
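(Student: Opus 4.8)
The plan is to carry out a Taylor expansion argument entirely analogous to the one used in the proof of Theorem~2.1, but now tracking the unknown constant $a$ and the values of $G$ and its derivatives at the point $t=1$. First I would set $e_n = x_n-\alpha$ and $c_h = \frac{f^{(h)}(\alpha)}{h!}$, and write down the expansions
\begin{eqnarray*}
f(x_n) &=& f'(\alpha)\left[e_n + c_2 e_n^2 + c_3 e_n^3 + O(e_n^4)\right],\\
f'(x_n) &=& f'(\alpha)\left[1 + 2c_2 e_n + 3c_3 e_n^2 + O(e_n^3)\right].
\end{eqnarray*}
Dividing these gives $\frac{f(x_n)}{f'(x_n)} = e_n - c_2 e_n^2 + \cdots$, and hence $y_n - \alpha = (1-a)e_n + a c_2 e_n^2 + O(e_n^3)$. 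The crucial observation that dictates the choice $a=2/3$ is this: unlike the classical Newton step ($a=1$) which makes $y_n-\alpha$ begin at order $e_n^2$, here the leading term of $y_n-\alpha$ is $(1-a)e_n$, so for $a\neq 1$ the quantity $t=\frac{f'(y_n)}{f'(x_n)}$ does \emph{not} tend to $1$ as $n\to\infty$. I would therefore need to be careful: $G$ is being evaluated at a point whose limit is determined by $a$, and the conditions in~(\ref{eqn:a33}) are imposed at $t=1$, which forces the leading behaviour of $t$ to approach $1$ in a compatible way. This means I expect to first expand $t$ as a function of $e_n$, determine its limiting value $t_0 = t|_{e_n=0}$, and then Taylor-expand $G(t)$ about $t=1$ using $G(t) = G(1) + G'(1)(t-1) + \tfrac12 G''(1)(t-1)^2 + \tfrac16 G^{(3)}(1)(t-1)^3 + \cdots$.

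Next I would substitute all of these expansions into the second line of~(\ref{eqn:a32}),
\[
x_{n+1} = x_n - \frac12\left[3 - \frac{f'(y_n)}{f'(x_n)}\right]\frac{f(x_n)}{f'(x_n)}\, G(t),
\]
and collect powers of $e_n$ in the resulting expression for $e_{n+1}=x_{n+1}-\alpha$. The systematic strategy is to impose successively that the coefficients of $e_n$, $e_n^2$, and $e_n^3$ all vanish. Forcing the $O(e_n)$ term to vanish should pin down a relation involving $a$ and $G(1)$; combined with $G(1)=1$ this is where $a=2/3$ emerges. Killing the $O(e_n^2)$ coefficient should then determine $G'(1) = -\tfrac14$, and killing the $O(e_n^3)$ coefficient should yield $G''(1)=2$. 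The boundedness requirement $|G^{(3)}(1)|\leq +\infty$ simply ensures that the $e_n^4$ term in the Taylor expansion of $G$ remains finite and contributes a well-defined asymptotic error constant rather than forcing a further cancellation, so that the method is genuinely fourth-order (and no higher in general). Having established the four conditions, substituting them back leaves an error equation of the form $e_{n+1} = \psi(c_2,c_3,c_4, G^{(3)}(1))\, e_n^4 + O(e_n^5)$, which I would identify with the referenced equation~(\ref{eqn:36}).

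The main obstacle I anticipate is bookkeeping rather than conceptual: because $a$ is a free parameter until the final step, the argument $t=\frac{f'(y_n)}{f'(x_n)}$ and the bracket $\left[3-\frac{f'(y_n)}{f'(x_n)}\right]$ both carry $a$-dependence through every order, and $G(t)$ multiplies the whole product, so the order-$e_n^k$ coefficient of $e_{n+1}$ is a genuinely coupled polynomial in $a$, $G(1)$, $G'(1)$, $G''(1)$ and the $c_j$. One must expand $f'(y_n)$ using the $O(e_n^3)$ (not merely $O(e_n^2)$) expansion of $y_n-\alpha$, since terms feed down into the fourth-order error coefficient. The delicate point is ensuring consistency: the choice $a=2/3$ must simultaneously be the value that both makes the low-order terms vanish and makes $t\to 1$ so that imposing conditions on $G$ at $t=1$ is legitimate; I would verify this compatibility explicitly before reading off $G'(1)$ and $G''(1)$. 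Everything else is a mechanical, if lengthy, series computation that can in practice be confirmed with a symbolic algebra system.
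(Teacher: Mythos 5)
Your overall computational framework (Taylor expansion in $e_n$, expansion of $G$ about $t=1$, then forcing successive coefficients to vanish) is sound, and it is in fact more ambitious than the paper's proof, which does not derive the conditions at all: the paper simply substitutes $a=2/3$ and the conditions $(\ref{eqn:a33})$ into $(\ref{eqn:a32})$ and verifies by expansion that the error collapses to the fourth-order equation $(\ref{eqn:36})$. However, two of your key claims are mathematically wrong, and one of them would stall your plan if followed literally. First, the assertion that for $a\neq 1$ the quantity $t=f'(y_n)/f'(x_n)$ does \emph{not} tend to $1$ is false: since $\alpha$ is a simple root, $f'(\alpha)\neq 0$, so $y_n=x_n-a\,f(x_n)/f'(x_n)\to\alpha$ for every fixed $a$, hence $f'(y_n)\to f'(\alpha)$ and $t\to 1$ regardless of $a$. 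What $a$ controls is only the \emph{rate}: one finds $t-1=-2ac_2e_n+O(e_n^2)$, i.e. $t-1=O(e_n)$ instead of the $O(e_n^2)$ behaviour one would get if $y_n-\alpha$ were $O(e_n^2)$. Consequently the "delicate compatibility point" you propose to verify — that $a=2/3$ is needed partly to make $t\to1$ — is vacuous; expanding $G$ about $1$ is legitimate for every $a$.

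Second, your roadmap for where the conditions emerge is incorrect. The coefficient of $e_n$ in $e_{n+1}$ is $1-G(1)$; it contains no $a$ whatsoever, so the first-order condition yields only $G(1)=1$ and cannot produce $a=2/3$ as you claim. Continuing with $G(1)=1$, the coefficient of $e_n^2$ is $c_2\bigl(1-a+2aG'(1)\bigr)$, which gives only the \emph{relation} $G'(1)=(a-1)/(2a)$, not yet the value $-1/4$; and the coefficient of $e_n^3$ splits into independent $c_2^2$- and $c_3$-parts, giving two further equations, namely (with $A=G'(1)-\tfrac12$ and $B=\tfrac12 G''(1)-\tfrac12 G'(1)$) the equations $2-3a(a-2)A=0$ and $-2-8aA-4a^2B=0$. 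Only the joint solution of this system — the $c_3$-part of the cubic coefficient combined with the quadratic relation — forces $a=2/3$, after which $G'(1)=-1/4$ and $G''(1)=2$ follow. So a derivation along your general lines does succeed (and reproduces the theorem's conditions), but not along the route you describe: had you literally tried to solve for $a$ from the $O(e_n)$ term, there would be no equation to solve. Your reading of the condition $\left|G^{(3)}(1)\right|\leq+\infty$ — that it merely guarantees a finite contribution to the $e_n^4$ coefficient in $(\ref{eqn:36})$ — is correct.
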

\begin{proof}
Using $(\ref{eqn:23})$ and $(\ref{eqn:24})$ and $a=2/3$ in the first step of $(\ref{eqn:a32})$, we have
\begin{eqnarray}\label{eqn:32}
y_n=\alpha+ \frac{e_n}{3}+\frac{2c_2e_n^2}{3}+\frac{4(c_3-c_2^2)e_n^3}{3} + . . . +O(e_n^6).
\end{eqnarray}
Now we should expand $f'(y_n)$ around the root by taking into consideration $(\ref{eqn:32})$.Thus, we have
\begin{equation}\label{eqn:33}
f'(y_n)=f'(\alpha)\left[1+\frac{2c_2e_n}{3}+\frac{(4c_2^2+c_3)e_n^2}{3}+ . . . +O(e_n^{6})\right].
\end{equation}
Furthermore, we have
\begin{eqnarray}\label{eqn:34}
\frac{f'(y_n)}{f'(x_n)}=1-\frac{2c_2}{3}e_n+\left(4c_2^2 - \frac{8c_3}{3}\right)e_n^2+ . . . +O(e_n^{6}).
\end{eqnarray}
By virtue of $(\ref{eqn:34})$ and $(\ref{eqn:a33})$, we attain
\begin{eqnarray}\label{eqn:35}
&&\frac{1}{2}\left[3-\frac{f'(y_n)}{f'(x_n)}\right]\frac{f(x_n)}{f'(x_n)}\times G(t) \nonumber\\
&&=e_n+\left[c_2c_3-\frac{c_4}{9}+-\frac{1}{81}\{309+32H^{(3)}(1)\}c_2^3\right]e_n^4 +O(e_n^{5}).
\end{eqnarray}
Finally using $(\ref{eqn:35})$ in $(\ref{eqn:a32})$, we can have the following general equation, which reveals the fourth-order convergence 
\begin{eqnarray}\label{eqn:36}
&& e_{n+1}=x_{n+1}-\alpha \nonumber\\
&& =x_n-\frac{1}{2}\left[3-\frac{f'(y_n)}{f'(x_n)}\right]\frac{f(x_n)}{f'(x_n)}\times G(t)-\alpha \nonumber\\
&&=\left[-c_2c_3+\frac{c_4}{9}+\frac{1}{81}\{309+32G^{(3)}(1)\}c_2^3\right]e_n^4 +O(e_n^{5}).
\end{eqnarray}
This proves the theorem.
\end{proof}
It is obvious that our novel class of iterations requires three evaluations per iteration, i.e. two first derivative and one function evaluations. Thus our new methods are optimal. Now by choosing appropriate weight functions as presented in $(\ref{eqn:a32})$, we can give optimal two-step methods, such as
\begin{eqnarray}\label{eqn:38}
y_n&=&x_n-\frac{2}{3}\frac{f(x_n)}{f'(x_n)},\nonumber\\
x_{n+}&=&x_n-\frac{1}{2}\left[3-\frac{f'(y_n)}{f'(x_n)}\right]
\left[\frac{9}{4}-\frac{9}{4}\frac{f'(y_n)}{f'(x_n)}+\left(\frac{f'(y_n)}{f'(x_n)}\right)^2\right]\frac{f(x_n)}{f'(x_n)}.
\end{eqnarray}
where its error equation is
\begin{eqnarray}\label{eqn:39}
e_{n+1}=\left[-c_2c_3+\frac{c_4}{9}+\frac{309}{81}c_2^3\right]e_n^4 +O(e_n^{5}).
\end{eqnarray}


\section{Examples.}
In this section we give some physical examples and compare our methods to other well know methods:\\\\
\textit{Example 4.1}\cite{Bradie}
Consider Plank's radiation law
\begin{eqnarray*}
\phi(\lambda)=\frac{8\pi c h \lambda^{-5}}{e^{ch/\lambda kT}-1},
\end{eqnarray*}
where $\lambda$ is the wavelength of the radiation, $t$ is the absolute temperature of the blackbody, $k$ is Boltzmann's constant, $h$ is the Planck's constant and $c$ is the speed of light. This formula calculate the energy density within an isothermal blackbody. Now we want to find wavelength  $\lambda$ which maximize energy density $\phi(\lambda)$. For maximum of $\phi(\lambda)$, it can easily seen that
\begin{eqnarray*}
\frac{(ch/\lambda kT)e^{ch/\lambda kT}}{e^{ch/\lambda kT}-1}=5.
\end{eqnarray*} 
Let $x=ch/\lambda kT$, then it becomes
\begin{eqnarray}\label{eqn:41}
e^{-x}=1-x/5.
\end{eqnarray}
Now the above equation can be rewritten as
\begin{eqnarray}\label{eqn:41}
f_1(x)=e^{-x}-1+x/5.
\end{eqnarray}
Our aim to find the root of the equation $f_1(x)=0$. Clearly zero is its one root, which is not of our interest. If we take $x=5$, then R.H.S. of $(\ref{eqn:41})$ becomes zero and L.H.S. is $e^{-5}\approx 6.74\times 10^{-3}$. This implies one root of the equation $f_1(x)=0$ is near to 5. So that here we compare some well known methods to our methods with initial guess 5.
\begin{table}[htb]
 \caption{Errors Occurring in the estimates of the root of function $f_1$ by the methods described below with initial guess $x_0=5$.}
  \begin{tabular}{llll} \hline
Methods                         &$\left|x_1-\alpha \right|$  &$\left|x_2-\alpha \right|$ & $\left|x_3-\alpha \right|$ \\ \hline 
Newton Method                   & 0.21464e-4 & 0.83264e-11 &0.12530e-23\\ 
Weerakoon   $(\ref{eqn:11d})$   & 0.11208e-6 & 0.37810e-23 &0.14517e-72\\ 
Homeier     $(\ref{eqn:11g})$   & 0.12544e-6 & 0.59456e-23 &0.63310e-72 \\ 
BN Method   $(\ref{eqn:11a})$   & 0.11256e-6 & 0.38466e-23 &0.15352e-72 \\  
Chun Method $(\ref{eqn:a31})$   & 0.98734e-7 & 0.22705e-23 &0.27611e-73 \\ 
Method      $(\ref{eqn:22})$    & 0.11256e-6 & 0.38466e-23 &0.15352e-72\\
Method      $(\ref{eqn:38})$    & 0.42864e-9 & 0.10085e-40 &0.30899e-167 \\                  
\hline
  \end{tabular}
  \label{tab:abbr}
\end{table}

 
\textit{Example 4.2} \cite{Smith} The depth of embedment $x$ of a sheet-pile wall is governed by the equation:
\begin{eqnarray*}
x=\frac{x^3+2.87x^2-10.28}{4.62}.
\end{eqnarray*}
It can be rewritten as
\begin{eqnarray*}
f_2(x)=\frac{x^3+2.87x^2-10.28}{4.62}-x.
\end{eqnarray*}
An engineer has estimated the depth to be $x=2.5$. Here we find the root of the equation $f_2(x)=0$ with initial guess 2.5 and compare some well known methods to our methods .
\begin{table}[htb]
 \caption{Errors Occurring in the estimates of the root of function $f_2$ by the methods described below with initial guess $x_0=2.5$.}
  \begin{tabular}{llll} \hline
Methods                         &$\left|x_1-\alpha \right|$  &$\left|x_2-\alpha \right|$ & $\left|x_3-\alpha \right|$ \\ \hline 
Newton Method                   & 0.85925e-1 & 0.32675e-2 &0.50032e-5\\ 
Weerakoon   $(\ref{eqn:11d})$   & 0.18271e-1 & 0.14770e-5 &0.79610e-18\\ 
Homeier     $(\ref{eqn:11g})$   & 0.49772e-2 &0.33027e-8 &0.95318e-27 \\ 
BN Method   $(\ref{eqn:11a})$   & 0.54594e-2 &0.63617e-8 &0.10016-25 \\ 
Chun Method $(\ref{eqn:a31})$   & 0.27815e-1 & 0.95903e-5 &0.41254e-15 \\ 
Method      $(\ref{eqn:22})$    & 0.54594e-2 & 0.63617e-8 &0.10016e-25\\ 
Method      $(\ref{eqn:38})$    & 0.80338e-2 & 0.15138e-8 &0.19455e-35 \\                  
\hline
  \end{tabular}
  \label{tab:abbr}
\end{table}
\\\\\\\\\\\\\\
\textit{Example 4.3} \cite{Smith} The vertical stress $\sigma_z$ generated at point in an elastic continuum under the edge of a strip footing supporting a uniform pressure $q$ is given by Boussinesq's formula to be:
\begin{eqnarray*}
\sigma_z=\frac{q}{\pi}\{x+Cos x\ Sin x \}
\end{eqnarray*}
A scientist is interested to estimate the value of $x$ at which the vertical stress $\sigma_z$ will be 25 percent of the footing stress $q$. Initially it is estimated that $x=0.4$. The above can be rewritten as for $\sigma_z$ is equal to 25 percent of the footing stress $q$:
\begin{eqnarray*}
f_3(x)=\frac{x+Cos x\ Sin x }{\pi}-\frac{1}{4}.
\end{eqnarray*}
Now we find the root of the equation $f_3(x)=0$ with initial guess 0.4 and compare some well known methods to our methods.
\begin{table}[htb]
 \caption{Errors Occurring in the estimates of the root of function $f_3$ by the methods described below with initial guess $x_0=0.4$.}
  \begin{tabular}{llll} \hline
Methods                         &$\left|x_1-\alpha \right|$  &$\left|x_2-\alpha \right|$ & $\left|x_3-\alpha \right|$ \\ \hline 
Newton Method                   & 0.10737e-3 & 0.50901e-8 &0.11442e-16\\ 
Weerakoon   $(\ref{eqn:11d})$   & 0.20631e-6 & 0.53436e-21 &0.92858e-65\\ 
Homeier     $(\ref{eqn:11g})$   & 0.52795e-6 & 0.19743e-19 &0.10325e-59 \\ 
BN Method   $(\ref{eqn:11a})$   & 0.42239e-7 & 0.13373e-23 &0.42435e-73\\ 
Chun Method $(\ref{eqn:a31})$   & 0.93064e-6 & 0.20624e-18 &0.22446e-56 \\ 
Method      $(\ref{eqn:22})$    & 0.42239e-7 & 0.13373e-23 &0.42435e-73\\ 
Method      $(\ref{eqn:38})$    & 0.25102e-8 & 0.17099e-30 &0.36814e-123 \\                  
\hline
  \end{tabular}
  \label{tab:abbr}
\end{table}

\textsc{Jai Prakash Jaiswal\\
Department of Mathematics, \\
Maulana Azad National Institute of Technology,\\
Bhopal, M.P., India-462051}.\\
E-mail: {asstprofjpmanit@gmail.com; jaiprakashjaiswal@manit.ac.in}.\\\\

\begin{thebibliography}{10}
\bibitem{Ozban}
A. Y. Ozban: Some new variants of Newton's method, Appl. Math. Letter, 17 (2004), 677-682.
\bibitem{Bradie}
B. Bradie: A Friendly Introduction to Numerical Analysis, Pearson Education Inc, New Delhi, (2006).
\bibitem{Ardelean}
C. Ardelean: A new third-order Newton-type iterative method for solving nonlinear equations, Appl. Math. Comput. 219 (2013), 9856-9864. 
\bibitem{Chun}
C. Chun and Y. I. Kim: Several new third-Order iterative methods for solving nonlinear equations, Acta Appl. Math. 109 (2010), 1053-1063.
\bibitem{Jain}
D. Jain: Families of Newton-like methods with fourth-order convergence, International Journal of Computer Mathematics, 90(5) (2013), 1072-1082. 
\bibitem{Smith}
D. V. Griffithms and I. M. Smith: Numerical methods for engineers, Second Edition, Chapman and Hall/CRC (Taylor and Francis Grpup), Special Indian Edition (2011).
\bibitem{Soleymani}
F. Soleymani: Two new classes of optimal Jarratt-type fourth order methods, Applied Mathemativs Letter, 25 (2012), 847-853.
\bibitem{Homeier1}
H. H. H. Homeier: A modified Newton method for root finding with cubic convergence, J. Comput. Appl. Math. 157
(2003), 227-230.
\bibitem{Homeier2}
H. H. H. Homeier: On Newton-type methods with cubic convergence, J. Comput. Appl. Math. 176 (2005), 425-432.
\bibitem{Kung}
H. T. Kung and J. F. Traub: Optimal order of one-point and multipoint iteration, JCAM 21 (1974), 643-651 .
\bibitem{Traub1} 
J. F. Traub: Iterative methods for solution of equations chelsea Publishing, New York, NY, USA (1997).
\bibitem{Frontini1}
M. Frontini: Hermite interpolation and a new iterative method for the computation of the roots of non-linear
equations, Calcolo 40 (2003), 109-119.
\bibitem{Frontini2}
M. Frontini and E. Sormani: Modified Newton's method with third-order convergence and multiple roots, J. Comput.
Appl. Math. 156 (2003), 345-354.
\bibitem{Frontini3}
M. Frontini and E. Sormani: Some variant of Newton's method with third-order convergence, Appl. Math. Comput.
140 (2003), 419-426.
\bibitem{Wang}
P. Wang: A third-order family of Newton-like iteration methods for solving nonlinear equations, J. Numer. Math.
Stoch. 3 (2011), 13-19.
\bibitem{Weerakoon}
S. Weerakoon, T. G. I. Fernando: A variant of Newton's method with accelerated third-order convergence, Appl.Math.
Lett.13 (2000) 87-93.
\bibitem{Gautschi}
W. Gautschi: Numerical Analysis: An Introduction Birkhauser, Barton, Mass, USA (1997).
\\\\
\end{thebibliography}
\end{document}